\newcommand{\Z}{\mathbb{Z}}
\newcommand{\gam}{\Gamma}
\newcommand{\N}{\mathbb{N}}
\newcommand{\Q}{\mathbb{Q}}
\newcommand{\bp}{\begin{problem}}
\newcommand{\ep}{\end{problem}}
\newcommand{\ba}{\begin{answer}}
\newcommand{\ea}{\end{answer}}
\newcommand{\ben}{\renewcommand{\theenumi}{\alph{enumi}}

\renewcommand{\labelenumi}{(\theenumi)}\begin{enumerate}}
\newcommand{\een}{\end{enumerate}}
\newcommand{\Out}{\mathrm{Out}}
\newtheorem{defin}{Definition}[section]
\newtheorem{thm}[defin]{Theorem}
\newtheorem{cor}[defin]{Corollary}
\newtheorem{lem}[defin]{Lemma}
\newtheorem{prob}[defin]{Problem}
\title[Colorability and RAAGs]{An algebraic characterization of $k$--colorability}
\begin{document}
\date{\today}
\author[R. Flores]{Ram\'{o}n Flores}
\address{Ram\'{o}n Flores, Department of Geometry and Topology, University of Seville, Spain}
\email{ramonjflores@us.es}

\author[D. Kahrobaei]{Delaram Kahrobaei}
\address{Delaram Kahrobaei, Department of Computer Science, University of York, UK, New York University, Tandon School of Engineering, PhD Program in Computer Science, CUNY Graduate Center}
\email{dk2572@nyu.edu, delaram.kahrobaei@york.ac.uk}

\author[T. Koberda]{Thomas Koberda}
\address{Thomas Koberda, Department of Mathematics, University of Virginia, Charlottesville, VA 22904}
\email{thomas.koberda@gmail.com}

\begin{abstract}
We characterize $k$--colorability of a simplicial graph via the intrinsic algebraic structure of the associated right-angled Artin group.  As
a consequence, we show that a certain problem about the existence of  homomorphisms from right-angled  Artin groups to products
of free groups is NP--complete.
\end{abstract}

\maketitle

\section{Introduction}
\label{Intro}

Let $\gam$ be a finite simplicial graph with vertex set $V=V(\gam)$ and edge set $E=E(\gam)$.
We say that $\gam$ is \emph{$k$--colorable} if there is a map $\kappa\colon V\to \{1,\ldots,k\}$ such that if  $\{v,w\}\in E$ then
$\kappa(v)\neq\kappa(w)$. The map $\kappa$ is called a \emph{$k$--coloring} of $\gam$.
 The minimal $k$ for which there exists such a map $\kappa$ is called the \emph{chromatic number} of
$\gam$ and is denoted $\chi(\gam)$.

The problem of finding a $k$--coloring of a given graph is fundamental in graph theory, and has many applications in discrete mathematics,
computational complexity, and computer science. Determining whether a given graph is $3$--colorable is known to be NP--complete, as is
determining a graph's chromatic number~\cite{GJ1979,Minsky67,AB09}.

As such, determining the existence of a $k$--coloring of a graph is a fundamental problem in theoretical complexity theory,
with a plethora of applications to
both theoretical and applied computer science; for instance, several cryptographic schemes based on the $3$--colorability problem
have been proposed, such as a post-quantum public key encryption known as Polly Cracker~\cite{LMPT}, and a zero-knowledge
proof system for graph
$3$--colorability~\cite{GMW91}.

This paper studies $k$--colorings of graphs via algebraic methods, specifically right-angled Artin groups~\cite{CharneyGD}.
To  a finite simplicial graph $\gam$,
we associate the right-angled Artin group $A(\gam)$ via \[A(\gam)=\langle V(\gam)\mid [v,w]=1 \textrm{ if and only if } \{v,w\}\in E(\gam)\rangle.\]

It is well-known that the isomorphism type of $A(\gam)$ determines the isomorphism type of $\gam$~\cite{Droms87,Sabalka09,KoberdaGAFA},
so that the graph theoretic properties of $\gam$ should be reflected in the algebra of $A(\gam)$. We are generally interested in the following
problem:
\begin{prob}\label{prob:property}
Let $P$ be a property of finite simplicial graphs. Find a property $Q$ of groups such that $\gam$ has $P$ if and only if $A(\gam)$ has
$Q$.
\end{prob}
In order for Problem~\ref{prob:property} to be interesting, one should insist that $Q$ be a property of the isomorphism type of a group only.
In particular, one should disallow reference to a generating set.

One reason for interest in Problem~\ref{prob:property} is that it can provide
insight into various graph theoretic problems and their computational complexity
from the more flexible point of view of the algebraic structure of $A(\gam)$. Moreover, algebraically formulated problems
can be approached using an arbitrary presentation  for $A(\gam)$, without reconstructing the full underlying graph of $A(\gam)$.

Here are some instances where a satisfactory answer to Problem~\ref{prob:property} is known:

\begin{enumerate}
\item
The graph $\gam$ is a nontrivial join if and only if $A(\gam)$ decomposes as a nontrivial direct product~\cite{Servatius1989}.
\item
The graph $\gam$ is disconnected if and only if $A(\gam)$ decomposes as a nontrivial free product~\cite{BradyMeier01}.
\item
The graph $\gam$ is square--free if and only if $A(\gam)$ does not contain a subgroup isomorphic to a product $F_2\times F_2$ of
nonabelian free groups~\cite{Kambites09,KK2013gt}.
\item 
The graph $\gam$ admits an independent set $D$ of vertices
such that every cycle in $\gam$ meets $D$ at least twice  if and only if the poly-free length of $A(\gam)$ is two~\cite{HS2007}.
\item
The graph $\gam$ is a cograph (i.e. a $P_4$--free graph) if and only if $A(\gam)$ is contained in the class of finitely generated groups which
contains $\Z$ and is closed under taking direct products and free products~\cite{KK2013gt,KK2018jt}.
\item 
The graph $\gam$ is a finite tree or a finite complete bipartite graph if and only if $A(\gam)$ is a semidirect product of
two free groups of finite rank~\cite{HS2007}.
\item
The graph $\gam$ admits a nontrivial automorphism if and only if the group $\Out(A(\gam))$ of outer automorphisms of $A(\gam)$ contains
a finite nonabelian group~\cite{FKK2019}.
\item
A sequence of graphs $\{\gam_i\}_{i\in\N}$ forms a graph expander family if and only if the cohomology rings $\{H^*(A(\gam_i),F)\}$ over an
arbitrary field $F$
 form a \emph{vector space expander family}~\cite{FKK2020exp}.
\end{enumerate}

In this paper, we prove the following result which characterizes the existence of a $k$--coloring of a graph $\gam$ (and in particular the
chromatic number of $\gam$) via right-angled Artin groups, thus providing an answer to Problem~\ref{prob:property} when $P$ is
$k$--colorability.

\begin{thm}\label{thm:main}
Let $\gam$ be a finite simplicial graph with $n$ vertices. The graph $\gam$ is $k$--colorable if and only if there is a surjective map
\[A(\gam)\to \prod_{i=1}^k F_i,\] where for $1\leq i\leq k$ the group $F_i$ is a free group of rank $m_i$, and where \[\sum_{i=1}^k m_i=n.\]
\end{thm}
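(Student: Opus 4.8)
The plan is to treat the two implications separately: the forward direction is a direct construction, while the reverse direction rests on a short linear-algebra argument obtained by abelianizing.

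For the forward direction, suppose $\kappa\colon V(\gam)\to\{1,\dots,k\}$ is a $k$-coloring. I would set $V_i=\kappa^{-1}(i)$ and $m_i=|V_i|$, so that $\sum_{i=1}^k m_i=n$, let $F_i$ be the free group with basis $V_i$, and define $\phi\colon A(\gam)\to\prod_{i=1}^k F_i$ by sending a vertex $v$ to the corresponding basis element in the factor $F_{\kappa(v)}$ and to the identity in every other factor. Since an edge $\{v,w\}$ forces $\kappa(v)\ne\kappa(w)$, the elements $\phi(v)$ and $\phi(w)$ lie in distinct direct factors and hence commute, so $\phi$ respects the defining relators of $A(\gam)$ and is well defined; it is visibly surjective.

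For the reverse direction, suppose $\phi\colon A(\gam)\to\prod_{i=1}^k F_i$ is surjective, with $F_i$ free of rank $m_i$ and $\sum_{i=1}^k m_i=n$. First I would abelianize: $\phi$ induces a surjection $\Z^n\cong A(\gam)^{ab}\to\bigoplus_{i=1}^k\Z^{m_i}\cong\Z^n$, and a surjective endomorphism of $\Z^n$ is an isomorphism, so the images $e_j:=\phibar(\bar v_j)$ of the vertex generators form a $\Z$-basis of $\bigoplus_i\Z^{m_i}$. I would record this as a matrix $M\in\GL_n(\Z)$ whose columns are $e_1,\dots,e_n$ and whose rows are grouped into consecutive blocks $I_1,\dots,I_k$ of sizes $m_1,\dots,m_k$ corresponding to the factors. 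The decisive step is to iterate the Laplace expansion along these row blocks: this expresses $\det M$ as a signed sum of products $\prod_{i=1}^k\det M[I_i,J_i]$ over ordered partitions $\{1,\dots,n\}=J_1\sqcup\cdots\sqcup J_k$ with $|J_i|=m_i$, so since $\det M=\pm1\ne 0$ some summand is nonzero, and for the corresponding partition every square block $M[I_i,J_i]$ is nonsingular. I would then color by declaring $\kappa(v_j)=i$ whenever $j\in J_i$.

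It remains to check that $\kappa$ is proper, which is where the group theory re-enters. If an edge $\{v_j,v_l\}$ with $j\ne l$ were monochromatic, say $j,l\in J_i$, then $[\phi(v_j),\phi(v_l)]=1$ in the product, so the images $\phi_i(v_j),\phi_i(v_l)$ under the projection to $F_i$ commute in the free group $F_i$; since commuting elements of a free group are powers of a common element, their abelianizations---which are precisely the two columns of $M[I_i,J_i]$ indexed by $j$ and $l$---are integer multiples of a single vector, hence linearly dependent, contradicting nonsingularity of $M[I_i,J_i]$. The main obstacle is exactly this reverse direction: one must notice that abelianization converts the problem into a statement about a nonsingular integer matrix, that the generalized Laplace expansion delivers precisely the required column partition, and that the hypothesis $\sum m_i=n$ is what forces the blocks to be square---so that nonsingularity is even meaningful and can clash with the linear dependence forced by a commuting pair. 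The degenerate case $m_i=0$ is harmless, since then $I_i$ is empty and contributes a factor $1$ to the expansion, corresponding to an unused color.
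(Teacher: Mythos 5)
Your proof is correct and follows essentially the same route as the paper: abelianize to get an invertible $n\times n$ integer matrix, use a (generalized) Laplace expansion to extract a partition into nonsingular square blocks matching the factors, and rule out monochromatic edges via cyclic centralizers in free groups forcing linearly dependent abelianized images. The only differences are cosmetic---you apply the $k$-block Laplace expansion in one step where the paper iterates a two-block version, and you transpose the row/column conventions.
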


We note that the number of vertices of $\gam$ is a canonical invariant of the isomorphism type of $A(\gam)$, since this is simply the rank
of the abelianization of $A(\gam)$. We state Theorem~\ref{thm:main} as we do
because  of the conciseness of  the hypotheses and the
conclusion,  though  in the course  of the  proof  it  will  become clear that we can weaken
the  hypotheses  somewhat. For  instance,  the target groups need  not be free,  but may be replaced by groups with the correct Betti numbers
in which infinite order elements have (virtually) cyclic centralizers. This last hypothesis is satisfied in all Gromov hyperbolic
groups~\cite{Gromov1987}, for instance. Moreover, we need not assume that $\phi$ is surjective ---  it  suffices  that $\phi$ induces an
isomorphism on  first rational homology.

Theorem~\ref{thm:main} has the following easy consequence, which  is of interest from the point of view of complexity theory.
\begin{cor}
Fix $k\geq 3$. The problem of determining whether a right-angled Artin group $A(\gam)$ surjects to a product of $k$ free groups, the sum
of whose ranks equals $|V(\gam)|$, is NP--complete.
\end{cor}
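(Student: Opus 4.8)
The plan is to deduce the corollary almost immediately from Theorem~\ref{thm:main}, so that essentially all of the content is already in hand; what remains is to fix the encoding of the input and to confirm that the problem lies in NP. I would take the input to the decision problem to be a finite simplicial graph $\gam$, which simultaneously serves as a presentation of $A(\gam)$ (the vertices are the generators, the edges record the commutation relations), the question being whether there exist free groups $F_1,\ldots,F_k$ of ranks $m_1,\ldots,m_k$ with $\sum_i m_i=|V(\gam)|$ and a surjection $A(\gam)\to\prod_{i=1}^k F_i$. By Theorem~\ref{thm:main}, $\gam$ is a ``yes'' instance of this problem precisely when $\gam$ is $k$--colorable.

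For NP--hardness I would exhibit the trivial reduction from $k$--colorability, sending a graph $\gam$ to itself. This is computable in linear time, and by Theorem~\ref{thm:main} it carries yes-instances to yes-instances and no-instances to no-instances. Since $k$--colorability is NP--complete for every fixed $k\geq 3$ \cite{GJ1979}, the surjection problem is NP--hard for every such $k$.

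For membership in NP I would use a proper $k$--coloring $\kappa\colon V(\gam)\to\{1,\ldots,k\}$ as the certificate. It has size linear in $\gam$, and a verifier can check in polynomial time that $\kappa(v)\neq\kappa(w)$ for every edge $\{v,w\}$. By the (easy) forward implication of Theorem~\ref{thm:main}, the existence of such a $\kappa$ already witnesses a yes-instance: putting $V_i=\kappa^{-1}(i)$, $m_i=|V_i|$ and sending each $v\in V_i$ to the corresponding generator of the $i$-th factor of $\prod_{i=1}^k F(V_i)$ defines a surjection with $\sum_i m_i=|V(\gam)|$. Conversely, if $\gam$ is a no-instance then by Theorem~\ref{thm:main} it has no proper $k$--coloring, so the verifier rejects every certificate. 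Combined with the previous paragraph, this shows the problem is NP--complete.

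I do not anticipate a genuine obstacle: the whole mathematical substance is Theorem~\ref{thm:main}, and what is left is bookkeeping with the standard complexity-theoretic definitions. The one point that deserves care is the choice of certificate for the NP upper bound. Instead of certifying a yes-instance by a coloring, one could certify it by the homomorphism itself --- images of generators as words in the free factors, together with the ranks $m_i$; verifying that this is a well-defined homomorphism reduces to checking coordinatewise commutation of finitely many pairs of elements of free groups, and surjectivity can be decided, for instance, by Stallings foldings. This alternative works because the homomorphism produced from a coloring in Theorem~\ref{thm:main} sends each generator either to a single generator of some $F_i$ or to the identity, so it has polynomial size and its surjectivity is transparent; the coloring certificate is merely the cleaner of the two.
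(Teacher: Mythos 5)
Your argument is correct and is essentially the paper's own proof: the paper also deduces the corollary directly from Theorem~\ref{thm:main} together with the NP--completeness of $k$--colorability for $k\geq 3$, and your identity reduction plus coloring certificate is just the standard unpacking of that one-line deduction. The extra care you take with the choice of certificate for the NP upper bound is sensible but does not change the route.
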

\begin{proof}
It is a direct consequence of Theorem~\ref{thm:main} and the fact that the $k$-colorability problem is NP--complete for $k\geq 3$ (see for example \cite{GJ1979}).
\end{proof}

\section{Proof of Theorem~\ref{thm:main}}

There is only one difficult direction in our proof of Theorem~\ref{thm:main}. The following easy lemma handles  the ``only if" direction.

\begin{lem}\label{lem:only if}
Suppose $\gam$ is a simplicial graph on $n$ vertices which is $k$--colorable.
Then $A(\gam)$ admits a surjection as in Theorem~\ref{thm:main}.
\end{lem}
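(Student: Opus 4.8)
The plan is to build the required surjection directly out of the combinatorial data of a $k$--coloring. Fix a $k$--coloring $\kappa\colon V\to\{1,\ldots,k\}$ of $\gam$, and for $1\leq i\leq k$ let $V_i=\kappa^{-1}(i)$ be the $i$--th color class, with $m_i=|V_i|$. Since $\kappa$ is defined on all of $V$, the color classes partition $V$, so $\sum_{i=1}^k m_i=|V|=n$. Let $F_i$ be the free group on the set $V_i$, so that $F_i$ is free of rank $m_i$, and form the target $\prod_{i=1}^k F_i$.

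Next I would define a homomorphism $\phi\colon A(\gam)\to\prod_{i=1}^k F_i$ on the generating set $V$ by sending a vertex $v$ with $\kappa(v)=i$ to the element of $\prod_{j=1}^k F_j$ whose $i$--th coordinate is the generator $v\in F_i$ and whose remaining coordinates are trivial. To check that this rule extends to a group homomorphism, it suffices to verify that the defining relators of $A(\gam)$ are sent to the identity, i.e. that $[\phi(v),\phi(w)]=1$ whenever $\{v,w\}\in E(\gam)$. But an edge $\{v,w\}$ forces $\kappa(v)\neq\kappa(w)$, so $\phi(v)$ and $\phi(w)$ are supported in distinct direct factors of $\prod_{j=1}^k F_j$ and hence commute. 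Thus $\phi$ is a well-defined homomorphism.

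Finally, surjectivity is immediate: for each $i$ and each $v\in V_i$, the image $\phi(v)$ is a generator of the $i$--th factor $F_i$ (placed in the $i$--th coordinate), so the image of $\phi$ contains a generating set of $\prod_{i=1}^k F_i$. Together with $\sum_i m_i=n$, this is exactly the surjection asserted in Theorem~\ref{thm:main}. There is no genuine obstacle in this direction; the one point requiring any verification is the well-definedness of $\phi$, and that is precisely where the defining property of a coloring --- adjacent vertices receive different colors, hence land in commuting factors --- is used. All of the real content of Theorem~\ref{thm:main} lies in the converse implication.
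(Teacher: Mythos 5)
Your proof is correct and is essentially the same as the paper's: the paper phrases the map as the quotient of $A(\gam)$ obtained by imposing $[a,b]=1$ for vertices in distinct color classes, which yields the right-angled Artin group on the join $V_1*\cdots*V_k$, i.e.\ exactly the product of free groups onto which your explicitly defined homomorphism surjects. The verification via von Dyck's theorem that adjacent vertices land in distinct (hence commuting) factors is the same key observation in both arguments.
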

\begin{proof}
Let $\kappa$ be a $k$--coloring, and write $V_i=\kappa^{-1}(i)\subset V$. If $\{v,w\}\in E$ then $v\in V_i$ and $w\in V_j$ for suitable
indices $i\neq j$. We may thus form a quotient $G$ of $A(\gam)$ by imposing the relation $[a,b]=1$ for all pairs $a\in V_i$ and $b\in V_j$
for $i\neq j$. Clearly the result will be a right-angled Artin group $A(\Lambda)$, where \[\Lambda=V_1*V_2*\cdots *V_k\] is the join
of the sets $\{V_1,\ldots,V_k\}$. Since for all $i$ the set $V_i$ is totally disconnected in $\gam$ (and hence in $\Lambda$), it is easy
to see that $A(\Lambda)$ is a direct product of free groups with ranks $\{|V_1|,\ldots,|V_k|\}$, and that
\[\sum_{i=1}^k |V_i|=n,\] as desired.
\end{proof}

We now turn our attention to the ``if" direction. Suppose
\[\phi\colon A(\gam)\to\prod_{i=1}^k F_i=G\] is a surjection as in Theorem~\ref{thm:main}.
We will fix notation and write $\{v_1,\ldots,v_n\}$ for the vertices of $\gam$ and $w_i=\phi(v_i)$. Recall that $m_i$ stands for the rank of $F_i$. We write $X=\{x_1,\ldots,x_n\}$ for a
generating set of $G$, where \[X_i=\{x_{1+\sum_{j<i}m_j},x_{2+\sum_{j<i}m_j},\ldots,x_{\sum_{j\leq i} m_j}\}\] generates the subgroup
\[\{1\}\times\cdots\times\{1\}\times F_i\times\{1\}\times\cdots\times\{1\}.\] For each $x_i\in X$ and $g\in G$, we write $\exp_{x_i}(g)$ for the
\emph{exponent sum} of $x_i$ in $g$, i.e. the image of the element $g$ under the homomorphism $G\to\Z$ which sends $x_i$ to $1$ and
$x_j$ to $0$ for $j\neq i$.

If $g\in G$ is arbitrary, we write $g=g_1\cdots g_k$, where $g_i\in\langle X_i\rangle$. It is easy to see that this expression for $g$ is unique.

\begin{lem}\label{lem:dimension}
Suppose $g,h\in G$ are elements such that $[g,h]=1$. Write $g=g_1\cdots g_k$ and $h=h_1\cdots h_k$.
Then for all $1\leq i\leq k$, the tuples \[(\exp_{\alpha}(g_i))_{\alpha\in X_i},(\exp_{\alpha}(h_i))_{\alpha\in X_i}\]
are rational multiples of each other.
\end{lem}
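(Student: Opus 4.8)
The plan is to reduce the statement to the structure of abelian subgroups of free groups, handling one factor $F_i$ at a time. First, since $G=\prod_{i=1}^k F_i$ is a direct product, commutators are computed coordinatewise: writing $g=g_1\cdots g_k$ and $h=h_1\cdots h_k$ with $g_i,h_i\in\langle X_i\rangle\cong F_i$, the factors $\langle X_i\rangle$ commute with one another, so $[g,h]=[g_1,h_1]\cdots[g_k,h_k]$ with $[g_i,h_i]\in\langle X_i\rangle$. By uniqueness of this decomposition, $[g,h]=1$ forces $[g_i,h_i]=1$ inside $\langle X_i\rangle$ for every $i$. Thus it suffices to fix $i$ and work entirely inside the free group $F_i=\langle X_i\rangle$ of rank $m_i$.

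The key input is that the subgroup $H=\langle g_i,h_i\rangle\leq F_i$ is cyclic. Indeed, by the Nielsen--Schreier theorem $H$ is free, being a subgroup of a free group; but $H$ is also abelian, since $g_i$ and $h_i$ commute; and a free group which is abelian is either trivial or infinite cyclic. Hence there exist $c\in F_i$ and integers $a,b\in\Z$ with $g_i=c^a$ and $h_i=c^b$ (with $a=b=0$ in the degenerate case $H=\{1\}$). (Equivalently, one may invoke the standard fact that the centralizer of a nontrivial element of a free group is infinite cyclic.)

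Finally I translate this into exponent sums. For $\alpha\in X_i$ the map $\exp_\alpha\colon G\to\Z$ is a homomorphism, so restricting to $F_i$ and letting $\alpha$ range over $X_i$ yields the homomorphism $F_i\to\Z^{m_i}$ that sends the free basis $X_i$ to the standard basis of $\Z^{m_i}$, i.e. the abelianization of $F_i$. Applying it to $g_i=c^a$ and $h_i=c^b$ gives $(\exp_\alpha(g_i))_{\alpha\in X_i}=a\cdot(\exp_\alpha(c))_{\alpha\in X_i}$ and $(\exp_\alpha(h_i))_{\alpha\in X_i}=b\cdot(\exp_\alpha(c))_{\alpha\in X_i}$. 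Hence both tuples lie on the line through the origin spanned by $(\exp_\alpha(c))_{\alpha\in X_i}$ in $\Q^{m_i}$, so they are rational multiples of one another (if $b\neq 0$ the first tuple is $(a/b)$ times the second; if $b=0$ then $h_i=1$ and its tuple is the zero multiple of the first), which is the assertion.

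I do not expect a genuine obstacle here: the entire content is the well-known classification of $2$--generated abelian subgroups of free groups. The only point deserving a word of care is the degenerate case in which $g_i$ or $h_i$ is trivial, where ``rational multiples of each other'' must be read as ``linearly dependent over $\Q$''; the computation above covers this uniformly.
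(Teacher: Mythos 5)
Your proof is correct and takes essentially the same route as the paper: reduce to a single free factor, invoke the fact that a commuting pair in a free group generates a cyclic subgroup (the paper phrases this via cyclic centralizers and a common nonzero power), and push through the exponent-sum homomorphism. Your version is merely more explicit about the coordinatewise reduction and the degenerate case, which the paper dispatches in one line each.
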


Note that the map \[g_i\mapsto (\exp_{\alpha}(g_i))_{\alpha\in X_i}\] just computes the image of $g_i$ in the abelianization
$H_1(F_i,\Z)$, using the images of elements of $X_i$ as an additive basis for $\Z^{m_i}$.
\begin{proof}[Proof of Lemma~\ref{lem:dimension}]
Fix $i$ arbitrarily. If one of these tuples consists of all zeros then there is nothing to show. Otherwise, we may suppose that these tuples
are nontrivial for both $g_i$ and $h_i$. In particular, we must have that both $g_i$ and $h_i$ are nontrivial group elements of
$\langle X_i\rangle$. The
centralizer of a nontrivial element of a free group is cyclic, so that then $g_i$ and $h_i$ must share a common nonzero power. Since the
exponent sum map is a homomorphism, the lemma is now immediate.
\end{proof}

We will  require the  following fact from linear algebra, which  we include for completeness.

\begin{lem}\label{lem:lin-alg}
Let $M$ be a complex $n\times n$ matrix of rank $n$, let $1\leq k\leq n-1$ be an integer,
and consider the block decomposition $M=(M_1\mid M_2)$, where
$M_1$ is an $n\times k$ matrix. There exist $k$ rows $\{r_1,\ldots,r_k\}$ which have the following properties:
\begin{enumerate}
\item
The submatrix $M_1'$ of $M_1$ spanned by the rows $\{r_1,\ldots,r_k\}$ has rank $k$.
\item
The submatrix $M_2'$ of $M_2$ obtained by deleting the rows $\{r_1,\ldots,r_k\}$ has rank $n-k$.
\end{enumerate}
\end{lem}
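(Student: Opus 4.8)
The plan is to read off the desired set of rows from the Laplace expansion of $\det M$ along its first $k$ columns (the expansion by complementary minors). Write $[n]=\{1,\dots,n\}$, and for a $k$--element subset $I\subseteq[n]$ let $(M_1)_I$ denote the $k\times k$ submatrix of $M_1$ on the rows indexed by $I$, and $(M_2)_{I^{c}}$ the $(n-k)\times(n-k)$ submatrix of $M_2$ on the rows indexed by the complement $I^{c}$. Since the first $k$ columns of $M$ are exactly the columns of $M_1$ and the last $n-k$ are the columns of $M_2$, the generalized Laplace expansion reads
\[
\det M=\sum_{\substack{I\subseteq[n]\\ |I|=k}}\varepsilon_I\,\det (M_1)_I\cdot\det (M_2)_{I^{c}},
\]
where $\varepsilon_I\in\{+1,-1\}$ is a sign depending only on $I$.

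Because $M$ has rank $n$, we have $\det M\neq 0$, so at least one summand on the right-hand side must be nonzero. Fix an index $I$ for which the corresponding term does not vanish; then $\det (M_1)_I\neq 0$ and $\det (M_2)_{I^{c}}\neq 0$ simultaneously, and taking $\{r_1,\dots,r_k\}$ to be the rows indexed by $I$ yields properties (1) and (2) at the same time, which is precisely the assertion.

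There is no real obstacle here; the only point worth flagging is that the complementary-minors expansion is exactly the device that \emph{couples} conditions (1) and (2) through a single index set $I$, whereas each condition is trivial to arrange in isolation — the columns of $M_1$ are linearly independent, so $\operatorname{rank} M_1=k$, and then $\operatorname{rank} M_2=n-k$ follows from $n=\operatorname{rank} M\le\operatorname{rank} M_1+\operatorname{rank} M_2\le n$. One should also note that the signs $\varepsilon_I$ play no role: we never claim a prescribed term is nonzero, only that a nonvanishing total forces some summand to be nonvanishing, which is insensitive to the signs. For a reader who prefers not to invoke the expansion by complementary minors, I would remark that the statement can instead be obtained by a matroid exchange argument — one looks for a set $I$ that is a basis for the matroid of linearly independent row-subsets of $M_1$ and whose complement is a basis for the analogous matroid of $M_2$, these two matroids having complementary ranks — but the determinant identity is the shortest path and is the one I would write up.
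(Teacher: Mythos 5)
Your proof is correct and is essentially the paper's own argument: both expand $\det M$ by complementary minors along the first $k$ columns and observe that a nonzero determinant forces some product $\det (M_1)_I\cdot\det (M_2)_{I^c}$ to be nonzero. The paper phrases this via the Leibniz formula rather than citing the Laplace expansion by name, but the mechanism is identical.
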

\begin{proof}
We have that the determinant of $M$ is nonzero, since $M$ is invertible.
We may  now expand the determinant about $k\times k$--subminors of $M_1$. That is, we consider a submatrix $M_1'$ of $M_1$
spanned by $k$ rows, and the submatrix $M_2'$ of $M_2$ obtained by deleting the $k$ rows used to define $M_1'$. We then consider the
complex number $\zeta=(\det M_1')\cdot (\det M_2')$. An easy application of the Leibniz formula shows that $\det M$ is an alternating sum
of the complex numbers $\zeta$, as $M_1'$ ranges over all possible choices of $k$ rows. Since  $\det M\neq 0$, there is a choice of $M_1'$
and $M_2'$ so that the corresponding value of $\zeta$ is nonzero. In particular, such a choice of $M_1'$ and $M_2'$ gives the desired
matrices of the correct  ranks, whence the lemma follows.
\end{proof}

The following lemma completes the proof of Theorem~\ref{thm:main}.

\begin{lem}\label{lem:if}
Let $\phi\colon A(\gam)\to G$ be a surjective homomorphism as above. Then $\gam$ admits a $k$--coloring.
\end{lem}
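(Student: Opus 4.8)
First I would note that since $\phi$ is surjective, the induced map $\bar\phi\colon H_1(A(\gam),\Q)\to H_1(G,\Q)$ is surjective; the source has dimension $n$ because the abelianization of $A(\gam)$ is free abelian on $V(\gam)$, and the target has dimension $m_1+\cdots+m_k=n$, so $\bar\phi$ is an isomorphism. Using $\{v_1,\dots,v_n\}$ as a basis of the source and $X=X_1\sqcup\cdots\sqcup X_k$ as a basis of the target, I would record $\bar\phi$ as an $n\times n$ matrix $M$ of rank $n$ whose $j$-th row is $(\exp_\alpha(w_j))_{\alpha\in X}$. The columns of $M$ are partitioned into blocks indexed by $X_1,\dots,X_k$ of sizes $m_1,\dots,m_k$, and, writing $w_j=w_{j,1}\cdots w_{j,k}$ with $w_{j,i}\in\langle X_i\rangle$ as in the paragraph preceding Lemma~\ref{lem:dimension}, the restriction of row $j$ to the columns indexed by $X_i$ is exactly the image of $w_{j,i}$ in $H_1(F_i,\Q)\cong\Q^{m_i}$, by the remark following Lemma~\ref{lem:dimension}.

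Next I would build a candidate coloring by repeatedly applying Lemma~\ref{lem:lin-alg}. We may discard any color $i$ with $m_i=0$, so assume each $m_i\geq 1$. If $k=1$ take $R_1=V$; otherwise apply Lemma~\ref{lem:lin-alg} to $M$ with $M_1$ equal to the $X_1$-columns to obtain a set $R_1$ of $m_1$ rows so that the submatrix $M_1'$ with rows $R_1$ and columns $X_1$ is invertible and the submatrix with the complementary rows and columns $X_2\sqcup\cdots\sqcup X_k$ has full rank $n-m_1$. Iterating this on the latter $(n-m_1)\times(n-m_1)$ full-rank matrix, whose columns are blocked by $X_2,\dots,X_k$, produces a partition $V=R_1\sqcup\cdots\sqcup R_k$ with $|R_i|=m_i$ such that for every $i$ the submatrix $M_i'$ of $M$ with rows $R_i$ and columns $X_i$ is an invertible $m_i\times m_i$ matrix. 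I would then define $\kappa\colon V\to\{1,\dots,k\}$ by $\kappa(v_j)=i$ whenever $v_j\in R_i$.

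Finally I would verify that $\kappa$ is a $k$--coloring. Suppose $\{v_j,v_\ell\}\in E$ and $\kappa(v_j)=\kappa(v_\ell)=i$. Then $v_j\neq v_\ell$ are both in $R_i$, so $m_i\geq 2$, and $[w_j,w_\ell]=\phi([v_j,v_\ell])=1$ in $G$. By Lemma~\ref{lem:dimension} the images of $w_{j,i}$ and $w_{\ell,i}$ in $H_1(F_i,\Q)$ are rational multiples of one another, hence linearly dependent over $\Q$; but by the first paragraph these images are precisely the rows of $M_i'$ indexed by $v_j$ and $v_\ell$. Thus $M_i'$ has two linearly dependent rows, contradicting its invertibility. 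Hence no edge of $\gam$ is monochromatic, so $\kappa$ is a $k$--coloring; this proves Lemma~\ref{lem:if} and, together with Lemma~\ref{lem:only if}, completes the proof of Theorem~\ref{thm:main}.

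The main obstacle is the middle step: an edge of $\gam$ imposes only a weak condition on $M$ at the level of abelianization, since the two relevant blocks are merely proportional and may even vanish, so the real content is arranging, through the subminor extraction of Lemma~\ref{lem:lin-alg}, that the $m_i$ rows assigned to color $i$ are forced to span an $m_i$--dimensional space; once that partition is secured, Lemma~\ref{lem:dimension} immediately forbids monochromatic edges. I would also note that the argument uses only that $M$ has rank $n$ over $\Q$, equivalently that $\phi$ induces an isomorphism on first rational homology, together with the fact that nontrivial elements of the $F_i$ have cyclic centralizers, which is precisely why the hypotheses can be relaxed as indicated in the introduction.
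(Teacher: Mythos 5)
Your proof is correct and follows essentially the same route as the paper: compute the induced isomorphism on $H_1(\cdot,\Q)$, iterate Lemma~\ref{lem:lin-alg} to partition the rows into blocks $R_i$ of size $m_i$ whose restriction to the $X_i$-columns is invertible, and then use Lemma~\ref{lem:dimension} to rule out monochromatic edges. The only (harmless) additions are your explicit handling of the degenerate cases $m_i=0$ and $k=1$.
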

\begin{proof}
Since the abelianizations of $A(\gam)$ and $G$ are both isomorphic to $\Z^n$, we have that the induced map
\[\phi_*\colon H_1(A(\gam),\Q)\rightarrow  H_1(G,\Q)\] is an isomorphism. We express this map as a matrix with respect to the additive bases
$\{v_1,\ldots,v_n\}$ for $H_1(A(\gam),\Q)$ and $\{x_1,\ldots, x_n\}$ for $H_1(G,\Q)$, so that \[\phi_*(v_i)=[w_i]=\sum_{j=1}^n \beta_j^i x_j\]
for suitable coefficients $\beta_j^i$. With respect to these bases, $\phi_*$ is represented by an $n\times n$ matrix
$A=(\beta_j^i)$ with rank exactly $n$, where here $\beta^i_j$ denotes the $(i,j)$--entry of $A$.
We write $A$ as a block column matrix \[A=(A_1\mid\cdots \mid A_k),\] where the column space of $A_i$ has dimension exactly $m_i$.

Consider the matrix $A_1$. Since $\phi_*$ is an isomorphism, we have that the row space of $A_1$ has dimension $m_1$. We may therefore
choose an $m_1\times m_1$ minor $B_1$ of $A_1$ with rank $m_1$. By
Lemma~\ref{lem:lin-alg}, we may assume that deleting the first $m_1$ columns of $A$ and the rows which appear in
$B_1$ results in a matrix of rank exactly $n-m_1$. We perform a row permutation of $A$
to get a matrix $A'$, so that the rows of $B_1$ are the
restriction of the first $m_1$ rows of $A'$ to the first $m_1$ columns of $A'$.

Deleting the first $m_1$ rows and columns of $A'$ results in a matrix of rank exactly $n-m_1$. Repeating this process for each block
column matrix results in an $n\times n$ block matrix \[B=(B_1\mid\cdots\mid B_k)\] with the following properties:
\begin{enumerate}
\item
The matrix $B$ is obtained from $A$  by permuting rows.
\item
The rows with index set $J_i=\{(\sum_{j<i}m_j)+1,\ldots,\sum_{j\leq i}m_j\}$ of $B_i$ have rank $m_i$.
\end{enumerate}

Observe that the rows of $B$ correspond to the group elements \[\{w_i=\phi(v_i)\}_{i=1}^n,\] where we have relabeled the
elements $\{v_1,\ldots,v_n\}$ so that $\phi(v_i)$ corresponds to the $i^{th}$ row of $B$. We define
$\kappa\colon V\to \{1,\ldots,k\}$ by
$\kappa(v_{\ell})=i$ if $\ell\in J_i$. It remains  to check that $\kappa$ is a valid coloring  of $\gam$. Suppose that $\kappa(v)=\kappa(w)=i$ for
distinct vertices of $V$. Then the elements $[\phi(v)]$ and $[\phi(w)]$ in $H_1(G,\Q)$ correspond to linearly independent rows of the block
$B_i$. We have that if $\{v,w\}\in E$ then $v$ and $w$ commute. Lemma~\ref{lem:dimension} implies that the rows corresponding to
$[\phi(v)]$ and $[\phi(w)]$ in the block $B_i$ are rational multiples of each other, which is a contradiction. We conclude that
$\{v,w\}\notin E$, so that $\kappa$ is a valid coloring of $\gam$.
\end{proof}

Observe that the proof of Theorem~\ref{thm:main} builds an explicit coloring of $\Gamma$ using the image of $V(\gam)$ under $\phi$.
In  principle,  an  algorithm which takes as  input the surjection from Theorem~\ref{thm:main} and outputs a coloring of $\gam$ can  be
implemented,  and even in polynomial time.

More precisely, let $\gam$ be a graph with $n$ vertices and let  $\phi$ be a surjection
from $A(\gam)$ to a product of  $k$ free  groups of total  rank $n$, given in terms of generators. We can compute the map induced by
$\phi$ on abelianizations in time which is linear  in the complexity of $\phi$.
Let $M$  be the resulting matrix  and $N$  a bound on the maximum of the entries of $M$, in absolute value.
Following the proof of Theorem~\ref{thm:main}, we break $M$ into  submatrices $M_1$  and $M_2$, and compute a sequence of
 determinants of submatrices of $M_1$ and $M_2$, of which there are at  most polynomially many
 as a function of $n$. All this would require
a  computation time which  is bounded by  a polynomial in $n$ and $N$. This allows us to sort the rows  of $M$ as in the proof of the
theorem and therefore produce a $k$--coloring in polynomial computing time.

It it straightforward to see  that  one  can start with a  $k$--coloring of $\gam$ and produce the surjection $\phi$ in polynomial time.

\section*{Acknowledgements}

\vspace{.5cm}

Ram\'{o}n Flores is supported by FEDER-MEC grant MTM2016-76453-C2-1-P and FEDER grant US-1263032 from the Andalusian
Government. Thomas Koberda is partially supported  by an
Alfred P. Sloan Foundation Research Fellowship, by NSF Grant DMS-1711488, and by NSF Grant DMS-2002596.
Delaram Kahrobaei is supported in part by a 
Canada's New Frontiers in Research Fund, under the Exploration grant entitled ``Algebraic Techniques for Quantum Security".
We thank Yago Antol\'{i}n, Juan Gonz\'alez-Meneses, Sang-hyun Kim,
and Andrew Sale for helpful discussions  and comments.
We thank the University of York for hospitality while part of this research
 was conducted. Finally, we thank an anonymous referee for helpful comments.
\bibliographystyle{amsplain}
\bibliography{ref}

\end{document}